\theoremstyle{plain}
\newtheorem{theorem}{Theorem}[section]
\newtheorem{lemma}[theorem]{Lemma}
\newtheorem{corollary}[theorem]{Corollary}
\numberwithin{equation}{section}
\newcommand{\du}{\mathrm{d}}
\newcommand{\iu}{\mathrm{i}}
\DeclareMathOperator{\im}{Im}
\DeclareMathOperator{\ind}{ind}
\DeclareMathOperator{\Res}{Res}
\title{On two-spectra inverse problems}
\author{Namig J. Guliyev}
\address{Institute of Mathematics and Mechanics, Azerbaijan National Academy of Sciences, 9 B.~Vahabzadeh str., AZ1141, Baku, Azerbaijan.}
\email{njguliyev@gmail.com}
\subjclass[2010]{34A55, 34B07, 34B24, 34L40, 47A75, 47E05}
\keywords{two-spectra inverse problem, one-dimensional Schr\"{o}dinger equation, boundary conditions dependent on the eigenvalue parameter}
\begin{document}
\maketitle
\begin{abstract}
We consider a two-spectra inverse problem for the one-dimensional Schr\"{o}dinger equation with boundary conditions containing rational Herglotz--Nevanlinna functions of the eigenvalue parameter and provide a complete solution of this problem.
\end{abstract}

\tableofcontents

\section{Introduction and main result} \label{sec:introduction}

The study of two-spectra inverse problems was initiated by Borg~\cite{B46}, who proved that the potential $q$ of the one-dimensional Schr\"{o}dinger equation
\begin{equation} \label{eq:SL}
  -y''(x) + q(x)y(x) = \lambda y(x)
\end{equation}
is uniquely determined by the spectra of the boundary value problems generated by this equation and the boundary conditions $y'(0) = h_1 y(0)$, $y'(\pi) = H y(\pi)$ and $y'(0) = h_2 y(0)$, $y'(\pi) = H y(\pi)$ respectively (with $h_1 \ne h_2$). Subsequent developments by Marchenko~\cite{M52}, Krein~\cite{K51}, Levitan and Gasymov~\cite{L64}, ~\cite{LG64} and others showed that not only the potential $q$ but also the boundary coefficients $h_1$, $h_2$ and $H$ are uniquely determined by these spectra, and that any two interlacing sequences satisfying certain asymptotic conditions are indeed the spectra of boundary value problems of the above form (see also \cite{M77}). These results were relatively recently generalized to problems with distributional potentials \cite{EGNT13}, \cite{HM04}, \cite{SS05}.

In this paper we are interested in two-spectra inverse problems for boundary value problems with boundary conditions dependent on the eigenvalue parameter. Such problems have also been considered in the literature. Some uniqueness results were obtained in~\cite{AOK09}, \cite{BP80}, \cite{BBW04}. The papers~\cite{C01}, \cite{M82} contain some existence results for problems with one eigenvalue-parameter-dependent boundary condition. In the case when only one of the boundary conditions depends linearly on the eigenvalue parameter, necessary and sufficient conditions for solvability of the two-spectra inverse problem were found in~\cite{K04}, \cite{G05}. For problems with coupled boundary conditions dependent on the eigenvalue parameter, see~\cite{IN17} and the references therein.

We consider two-spectra inverse problems for boundary value problems generated by the equation~(\ref{eq:SL}) together with boundary conditions of the form
\begin{equation} \label{eq:boundary}
  \frac{y'(0)}{y(0)} = -f(\lambda), \qquad \frac{y'(\pi)}{y(\pi)} = F(\lambda),
\end{equation}
where $q \in \mathscr{L}_2(0, \pi)$ is real-valued, $f$ is a rational Herglotz--Nevanlinna function, and $F$ is either a rational Herglotz--Nevanlinna function or the symbol $\infty$ to denote the Dirichlet condition at the right endpoint. A boundary condition from~(\ref{eq:boundary}) is also interpreted as the Dirichlet condition whenever $\lambda$ is a pole of the corresponding function on the right-hand side. Using Darboux-type transformations between such boundary value problems, we recently obtained in~\cite{G17} various direct and inverse spectral results for boundary value problems of the form~(\ref{eq:SL})-(\ref{eq:boundary}), and then extended these results to operators with distributional potentials~\cite{G19} and Bessel-type singularities~\cite{G20b}. But these transformations are not applicable to two-spectra inverse problems because a pair of boundary value problems with a common boundary condition is transformed to a pair of boundary value problems with no common boundary conditions. Therefore we first reduce our two-spectra problem to an inverse problem solved in~\cite{G17} and then completely solve the two-spectra problem.

We denote the boundary value problem (\ref{eq:SL})-(\ref{eq:boundary}) by $\mathscr{P}(q, f, F)$ and use the notation
\begin{equation*}
  x_n = y_n + \ell_2 \left( \frac{1}{n} \right)
\end{equation*}
to mean $\sum_{n = 0}^\infty \left| n (x_n - y_n) \right|^2 < \infty$. It is easy to see that an eigenvalue of $\mathscr{P}(q, f, F)$ is also an eigenvalue of $\mathscr{P}(q, f + \alpha, F)$ with $\alpha \ne 0$ if and only if this eigenvalue is a pole of $f$. Throughout this paper we assume that no eigenvalue of $\mathscr{P}(q, f, F)$ is a pole of $f$ and thus the spectra of the problems $\mathscr{P}(q, f, F)$ and $\mathscr{P}(q, f + \alpha, F)$ are disjoint. It turns out that the problems $\mathscr{P}(q, f, F)$ and $\mathscr{P}(q, f + \alpha, F)$ are not, in general, uniquely determined by their spectra. However, we are able to describe all pairs of problems with the given two spectra. Our main result reads as follows.

\begin{theorem} \label{thm:two_spectra}
  Two disjoint sequences $\{ \lambda_n \}_{n \ge 0}$ and $\{ \mu_n \}_{n \ge 0}$ are the eigenvalues of a pair of problems of the form $\mathscr{P}(q, f, F)$ and $\mathscr{P}(q, f + \alpha, F)$ if and only if they interlace and satisfy asymptotics of the form
\begin{equation*}
  \sqrt{\lambda_n} = n - L + \frac{\sigma}{\pi n} + \ell_2 \left( \frac{1}{n} \right), \qquad \sqrt{\lambda_n} - \sqrt{\vphantom{\lambda_n} \mu_n} = (n - L)^{-2 r - 1} \left( \nu + \ell_2 \left( \frac{1}{n} \right) \right)
\end{equation*}
for some integer or half-integer $L \ge -1/2$, $\sigma \in \mathbb{R}$, $\nu \in \mathbb{R} \setminus \{0\}$ and $r \in \{0, 1\}$, with the exception of the case when $L = -1/2$ and $r = 1$. Moreover, there is a one-to-one correspondence between such problems and sets of nonnegative integers of cardinality not exceeding $L + (1 - r) / 2$.
\end{theorem}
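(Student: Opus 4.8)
The plan is to reduce the two-spectra problem to the single-problem inverse spectral results already established in~\cite{G17}. The key observation is that the spectra $\{\lambda_n\}$ and $\{\mu_n\}$ of $\mathscr{P}(q, f, F)$ and $\mathscr{P}(q, f + \alpha, F)$ are the zeros of the characteristic functions $\Phi(\lambda)$ and $\Phi_\alpha(\lambda)$, which share the same Weyl-type solution determined by the right boundary condition $F$ but differ only through the left condition. I would first express the characteristic functions in terms of the Weyl $m$-function $m(\lambda) = \varphi'(0,\lambda)/\varphi(0,\lambda)$ of the half-line problem with the condition $F$ at $\pi$, so that $\lambda_n$ are the solutions of $m(\lambda) = -f(\lambda)$ and $\mu_n$ of $m(\lambda) = -f(\lambda) - \alpha$. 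The difference of the two characteristic functions is then a multiple of $\alpha$ times a product involving $\varphi(0,\lambda)$, and from the interlacing of $\{\lambda_n\}$ and $\{\mu_n\}$ together with the prescribed asymptotics one reads off the index of $f$, the degree data $(d, D, h_0, H_0)$, and the parameters $L$ and $r$.

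Next I would carry out the necessity direction: assuming $\{\lambda_n\}$ and $\{\mu_n\}$ arise from such a pair, the interlacing is forced by the monotonicity of the Herglotz--Nevanlinna function $m(\lambda) + f(\lambda)$ between its poles, and the asymptotics follow from the known large-$\lambda$ behavior of the transcendental characteristic function, where the exponent $-2r-1$ encodes whether the leading coefficients $h_0$ (and correspondingly the top of the continued-fraction expansion of $f$) contribute. The half-integer versus integer value of $L$ distinguishes whether $h_0 = 0$ or $h_0 > 0$, and the excluded case $L = -1/2$, $r = 1$ corresponds to a degenerate configuration that cannot occur. For the sufficiency direction I would construct, from the two interlacing sequences, a meromorphic Herglotz--Nevanlinna function whose zeros and poles are prescribed by $\{\lambda_n\}$ and $\{\mu_n\}$; dividing out the rational part $f$ (and its shift by $\alpha$) recovers a candidate $m$-function, and an application of the solvability criterion from~\cite{G17} produces the potential $q$ and the boundary function $F$.

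The counting statement --- that the pairs with given spectra are in one-to-one correspondence with sets of nonnegative integers of cardinality at most $L + (1-r)/2$ --- is where the genuine work lies, and I expect it to be the main obstacle. The non-uniqueness stems from the freedom, once the product $m(\lambda) + f(\lambda)$ is fixed by the data, to redistribute a bounded number of the interlacing nodes between the ``spectrum'' role and the ``pole of $f$'' role: each admissible choice of which low-lying eigenvalues coincide with poles $h_k$ of $f$ (subject to the constraint that no eigenvalue of $\mathscr{P}(q,f,F)$ is actually such a pole) yields a distinct valid pair $(q, f, F)$. I would make this precise by analyzing the finitely many ways to split off the rational factor $\prod_{k=1}^d (h_k - \lambda)$ from the entire characteristic function while preserving the Herglotz--Nevanlinna structure and the index condition $\ind f \ge 0$; the bound $L + (1-r)/2$ emerges as the maximal number of such low-frequency slots available, consistent with the asymptotic parameters. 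Verifying that every set of nonnegative integers of that cardinality gives rise to an admissible problem, and that distinct sets give non-isomorphic problems, requires checking injectivity and surjectivity of this correspondence against the reconstruction algorithm of~\cite{G17}, and reconciling the parity bookkeeping in $L$ and $r$ with the degrees $d$ and $D$.
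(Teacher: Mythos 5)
Your overall strategy---reduce the two-spectra problem to the eigenvalues-plus-norming-constants inverse problem solved in \cite{G17}---is indeed the paper's strategy, and your necessity sketch (interlacing via a Herglotz--Nevanlinna function, asymptotics via the characteristic functions) is broadly consistent with Sections \ref{sec:two_problems}--\ref{sec:inverse}. But the two places where you locate the ``genuine work'' contain real errors.

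First, your account of the non-uniqueness is wrong. You claim the freedom consists in choosing ``which low-lying eigenvalues coincide with poles $h_k$ of $f$.'' This cannot be the mechanism: the standing assumption (required for the two spectra to be disjoint at all) is precisely that \emph{no} eigenvalue of $\mathscr{P}(q, f, F)$ is a pole of $f$, so there is no such choice to make. The actual freedom lives in a third set of points: since $\Psi(\lambda) - \Phi(\lambda) = \alpha f_\downarrow(\lambda) \chi(0, \lambda)$, the poles of $f$ are zeros of $\Psi - \Phi$, and the paper shows that \emph{any} $d$ zeros $\tau_{i_1} < \ldots < \tau_{i_d}$ of $\Phi - \Psi$, with $d \le L + (1-r)/2$ but the indices $i_k$ otherwise arbitrary (not restricted to ``low-frequency slots''), may be designated as the poles of $f$. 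Injectivity of this correspondence is then not a soft bookkeeping matter: it follows from the moment identities of Lemma~\ref{lem:zero_sum} and the positive definiteness of the Hankel form $\sum_{i,j} s_{i+j} \xi_i \xi_j$, which force $f_\downarrow(\lambda) = h'_0 \prod_{k=1}^d (\tau_{i_k} - \lambda)$ once the spectra and the designated poles are fixed. Your parametrization by ``low-lying eigenvalues'' would not even produce the right index set.

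Second, your sufficiency construction is circular. You propose to build a meromorphic Herglotz--Nevanlinna function with zeros $\mu_n$ and poles $\lambda_n$ and then ``divide out the rational part $f$'' to obtain an $m$-function---but $f$ is one of the unknowns, so there is nothing to divide out. The paper's mechanism is the Wronskian identity $\Psi(\lambda_n) = \alpha \beta_n f_\downarrow^2(\lambda_n)$, equivalently $\gamma_n = \alpha f_\downarrow^2(\lambda_n) \Phi'(\lambda_n) / \Psi(\lambda_n)$, which is \emph{inverted}: one defines candidate norming constants $\gamma_n := \pi \nu p^2(\lambda_n) \Phi'(\lambda_n) / \Psi(\lambda_n)$ directly from the infinite products $\Phi$, $\Psi$ built from the given sequences and from the chosen polynomial $p(\lambda) = \prod_{k=1}^d (\tau_{i_k} - \lambda)$; positivity comes from interlacing, the asymptotics $\gamma_n = (n-L)^{4d + 2r} \left( \pi/2 + \ell_2(1/n) \right)$ from Lemmas~\ref{lem:infinite_product} and~\ref{lem:G}, and then \cite[Theorem 4.4]{G17} yields $\mathscr{P}(q, f, F)$. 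Even after that, one must still prove that $\mathscr{P}(q, f + \alpha, F)$ has eigenvalues exactly $\{\mu_n\}$ and not merely some sequence close to them; the paper does this by checking $\widehat{\Psi}(\lambda_n) = \Psi(\lambda_n)$ and showing $( \widehat{\Psi} - \Psi ) / \Phi$ is entire and $O ( 1/\sqrt{\lambda} )$, hence identically zero by Liouville's theorem. This final identification step is entirely absent from your outline.
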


In particular, the proof of this theorem (Section~\ref{sec:inverse}) also yields the following uniqueness result.

\begin{corollary} \label{cor:uniqueness}
  The problems $\mathscr{P}(q, f, F)$ and $\mathscr{P}(q, f + \alpha, F)$ are uniquely determined by their spectra and the poles of $f$.
\end{corollary}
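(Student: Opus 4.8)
The plan is to read the uniqueness off directly from the bijection that the proof of Theorem~\ref{thm:two_spectra} establishes. That proof sets up, for any admissible interlacing pair $\{\lambda_n\}_{n \ge 0}$, $\{\mu_n\}_{n \ge 0}$, a one-to-one correspondence between the pairs of problems $(\mathscr{P}(q, f, F), \mathscr{P}(q, f + \alpha, F))$ realizing these two spectra and the finite sets $S \subseteq \mathbb{Z}_{\ge 0}$ with $\card S \le L + (1 - r)/2$. The entire non-uniqueness is thus encoded in the choice of $S$, so the corollary reduces to showing that prescribing the poles $h_1 < \cdots < h_d$ of $f$ pins down $S$. Note that this is well posed, since $f$ and $f + \alpha$ share the same poles, so ``the poles of $f$'' is an unambiguous attribute of the pair.

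First I would isolate, from the reduction to the inverse problem of~\cite{G17}, the data that the two spectra determine unconditionally: the asymptotic parameters $L, \sigma, \nu, r$, read off the two expansions, together with $\alpha$, the potential $q$, and the right-hand function $F$. What is left underdetermined is precisely the left-hand function $f$, and since its residues and the number $d$ of poles are forced once the pole \emph{locations} are fixed (by the Herglotz--Nevanlinna normalization and the interlacing/asymptotic data), the only genuine freedom is the choice of the set of poles of $f$. I then expect the construction in the proof of the theorem to produce, from the two spectra alone, a canonical strictly increasing sequence $\{\eta_m\}_{m \ge 0}$ of candidate pole locations and to show that the pair indexed by $S$ is exactly the one whose $f$ has poles $\{\eta_m : m \in S\}$, so that in particular $d = \card S \le L + (1 - r)/2$. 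Granting this, the map $S \mapsto \{\eta_m : m \in S\}$ is injective because the $\eta_m$ are pairwise distinct; hence the unordered pole set $\{h_1, \ldots, h_d\}$ recovers $S$, and by the theorem's bijection it recovers the pair. This is the assertion of the corollary.

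The main obstacle I anticipate is the step identifying $S$ with the pole set: I must verify that the degrees of freedom indexed by $S$ act exactly by switching individual poles of $f$ on or off at the prescribed slots $\eta_m$, rather than through some entangled modification that also perturbs $q$, $F$, or $\alpha$. This is where the Herglotz--Nevanlinna structure of $f$ and the bound $\card S \le L + (1 - r)/2$ on the number of admissible pole slots have to be used, to guarantee both that each choice of $S$ yields a genuinely different admissible $f$ with the correct number of poles and that no two distinct pole sets can produce the same pair (consistently, the excluded case $L = -1/2$, $r = 1$ is exactly the one for which the bound $L + (1 - r)/2$ is negative and no admissible $S$ exists). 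Once this bookkeeping is in place, the uniqueness statement follows at once.
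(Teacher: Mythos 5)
Your overall strategy is the paper's own: the corollary is read off the bijection established in the proof of Theorem~\ref{thm:two_spectra}, and your ``canonical sequence of candidate pole locations'' is exactly what that proof supplies, namely the zeros $\tau_0 < \tau_1 < \ldots$ of $\Phi - \Psi$, where $\Phi$ and $\Psi$ are the infinite products built from the two spectra (Lemma~\ref{lem:infinite_product}). The paper shows that the pair attached to an index set $S = \{i_1, \ldots, i_d\}$ has $f_\downarrow = h'_0 p$ with $p(\lambda) = \prod_{k=1}^d (\tau_{i_k} - \lambda)$ (the Hankel-matrix/moment argument of Section~\ref{sec:preliminaries} applied in Section~\ref{sec:inverse}), and conversely that for any pair realizing the two spectra the zeros of $f_\downarrow$ lie among the $\tau_m$ (the identity $\Psi - \Phi = \alpha f_\downarrow \chi(0, \cdot)$). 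Your closing step --- injectivity of $S \mapsto \{\tau_m : m \in S\}$, so the pole set recovers $S$ and the bijection recovers the pair --- is precisely how the uniqueness follows in the paper.

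One intermediate claim of yours is false, however, and should be removed: the two spectra do \emph{not} determine $q$, $F$ and $\alpha$ ``unconditionally'', leaving only $f$ free. The non-uniqueness in Theorem~\ref{thm:two_spectra} moves the whole triple: the construction with $\card S = d$ yields $\ind f = 2d + r$ and $\ind F = 2L - 2d - r$, so already $F$ itself changes with the cardinality of $S$, and nothing forces $q$ to stay fixed either; likewise $\alpha = \pi\nu/(h'_0)^2$ is pinned by the spectra only when $r = 0$ (when $r = 1$ it involves the leading coefficient $h_0$ of the constructed $f$). Consequently the verification you anticipate in your last paragraph --- that the freedom indexed by $S$ acts ``exactly by switching individual poles of $f$ on or off'' without perturbing $q$, $F$ or $\alpha$ --- would fail: it does perturb them. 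What is true, and what the corollary actually needs, is the weaker statement you also formulate: whatever else varies with $S$, the $f$ of the pair attached to $S$ has pole set exactly $\{\tau_m : m \in S\}$. Since distinct sets $S$ give distinct pole sets, the two spectra together with the poles of $f$ still determine $S$, hence the pair; so your conclusion and the route to it are sound once that side claim is excised.
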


So in a sense, the amount of spectral data required for the unique determination in our case (two spectra and a finite number of indices) is between that of the classical case (two spectra only) and of problems with coupled boundary conditions (two spectra and an infinite sequence of signs); for the latter case see, e.g.,~\cite{GN95} and the references therein.

The paper is organized as follows. In Section~\ref{sec:preliminaries} we introduce the necessary notation and obtain some useful identities. In Section~\ref{sec:two_problems} we find some conditions for the eigenvalues of the problems $\mathscr{P}(q, f, F)$ and $\mathscr{P}(q, f + \alpha, F)$. Section~\ref{sec:inverse} is devoted to the proof of our main result, Theorem~\ref{thm:two_spectra}. Finally, in the appendix we prove two auxiliary lemmas used in the main text.

\section{Preliminaries} \label{sec:preliminaries}

First we introduce some further notation. Since $f$ is a rational Herglotz--Nevan\-linna function, it can be written as
\begin{equation*}
  f(\lambda) = h_0 \lambda + h + \sum_{k=1}^d \frac{\delta_k}{h_k - \lambda}
\end{equation*}
with $h_0 \ge 0$, $h \in \mathbb{R}$, $\delta_k > 0$, and $h_1 < \ldots < h_d$. We assign to $f$ two polynomials $f_\uparrow$ and $f_\downarrow$ by writing this function as
$$
  f(\lambda) = \frac{f_\uparrow(\lambda)}{f_\downarrow(\lambda)},
$$
where
$$
  f_\downarrow(\lambda) := h'_0 \prod_{k=1}^d (h_k - \lambda), \qquad h'_0 := \begin{cases} 1 / h_0, & h_0 > 0, \\ 1, & h_0 = 0. \end{cases}
$$
We define the \emph{index} of $f$ as
$$
  \ind f := \begin{cases} 2 d + 1, & h_0 > 0, \\ 2 d, & h_0 = 0. \end{cases}
$$
The number $\ind F$ and the polynomials $F_\downarrow$ and $F_\uparrow$ are defined in a similar way when $F$ is also a rational Herglotz--Nevanlinna function. If $F = \infty$ then we just set
$$
  F_\uparrow(\lambda) := -1, \qquad F_\downarrow(\lambda) := 0, \qquad \ind F := -1.
$$

Let $\varphi(x, \lambda)$, $\psi(x, \lambda)$ and $\chi(x, \lambda)$ be the solutions of~(\ref{eq:SL}) satisfying the initial conditions
\begin{equation} \label{eq:phi_psi}
\begin{aligned}
  \varphi(0, \lambda) &= f_\downarrow(\lambda), & \psi(0, \lambda) &= f_\downarrow(\lambda), & \chi(\pi, \lambda) &= F_\downarrow(\lambda), \\
  \varphi'(0, \lambda) &= -f_\uparrow(\lambda), \quad & \psi'(0, \lambda) &= -f_\uparrow(\lambda) - \alpha f_\downarrow(\lambda), \quad & \chi'(\pi, \lambda) &= F_\uparrow(\lambda).
\end{aligned}
\end{equation}
Then the eigenvalues of the boundary value problems $\mathscr{P}(q, f, F)$ and $\mathscr{P}(q, f + \alpha, F)$ coincide with the zeros of (their \emph{characteristic functions})
\begin{equation*}
  \Phi(\lambda) := F_\uparrow(\lambda) \varphi(\pi, \lambda) - F_\downarrow(\lambda) \varphi'(\pi, \lambda) = f_\downarrow(\lambda) \chi'(0, \lambda) + f_\uparrow(\lambda) \chi(0, \lambda)
\end{equation*}
and
\begin{equation*}
  \Psi(\lambda) := F_\uparrow(\lambda) \psi(\pi, \lambda) - F_\downarrow(\lambda) \psi'(\pi, \lambda) = f_\downarrow(\lambda) \chi'(0, \lambda) + \left( f_\uparrow(\lambda) + \alpha f_\downarrow(\lambda) \right) \chi(0, \lambda)
\end{equation*}
respectively. These eigenvalues have the asymptotics (see \cite[Theorem 4.2]{G17} for details)
\begin{equation} \label{eq:lambda}
  \sqrt{\lambda_n} = n - \frac{\ind f + \ind F}{2} + \frac{\sigma}{\pi n} + \ell_2 \left( \frac{1}{n} \right)
\end{equation}
and
\begin{equation*}
  \sqrt{\mu_n} = n - \frac{\ind f + \ind F}{2} + \frac{\sigma'}{\pi n} + \ell_2 \left( \frac{1}{n} \right)
\end{equation*}
with
\begin{equation*}
  \sigma - \sigma' = \begin{cases} \alpha, & \ind f \text{ is even} \\ 0, & \ind f \text{ is odd.} \end{cases}
\end{equation*}
In the next section we will obtain more refined asymptotics for the difference of the square roots of the eigenvalues $\lambda_n$ and $\mu_n$.

Since for each eigenvalue $\lambda_n$ of $\mathscr{P}(q, f, F)$ the solutions $\varphi(x, \lambda_n)$ and $\chi(x, \lambda_n)$ are linearly dependent, there exists a unique number $\beta_n \ne 0$ such that
\begin{equation} \label{eq:beta}
  \chi(x, \lambda_n) = \beta_n \varphi(x, \lambda_n).
\end{equation}
The boundary value problem~(\ref{eq:SL})-(\ref{eq:boundary}) is equivalent to an eigenvalue problem for a self-adjoint operator in a Hilbert space of the form $\mathscr{L}_2(0, \pi) \oplus \mathbb{C}^k$, in the sense that their eigenvalues coincide and the eigenfunctions of~(\ref{eq:SL})-(\ref{eq:boundary}) are in one-to-one correspondence with the first components of the eigenvectors of the self-adjoint operator~\cite[Section 2.2]{G17} (see also~\cite{G20a} for a generalization of this technique to the case of not necessarily rational Herglotz--Nevanlinna functions). For each nonnegative integer $n$, the \emph{norming constant} $\gamma_n$ is defined as the squared norm of the eigenvector of this operator whose first component coincides with $\varphi(x, \lambda_n)$:
\begin{equation*}
  \gamma_n := \int_0^{\pi} \varphi^2(x, \lambda_n) \,\du x + f'(\lambda_n) f_\downarrow^2(\lambda_n) + \frac{F'_\uparrow(\lambda_n) F_\downarrow(\lambda_n) - F_\uparrow(\lambda_n) F'_\downarrow(\lambda_n)}{\beta_n^2},
\end{equation*}
where $f'(\lambda_n)$ is well defined by our assumption that no eigenvalue is a pole of $f$. These norming constants have the asymptotics (\cite[Theorem 4.2]{G17})
\begin{equation} \label{eq:gamma}
  \gamma_n = \frac{\pi}{2} \left( n - \frac{\ind f + \ind F}{2} \right)^{2 \ind f} \left( 1 + \ell_2 \left( \frac{1}{n} \right) \right).
\end{equation}
The sequences $\{ \lambda_n \}_{n \ge 0}$, $\{ \beta_n \}_{n \ge 0}$ and $\{ \gamma_n \}_{n \ge 0}$ are related by the identity (\cite[Lemma~2.1]{G17})
\begin{equation} \label{eq:Phi_beta_gamma}
  \Phi'(\lambda_n) = \beta_n \gamma_n.
\end{equation}

In the remaining part of this section, we are going to show that the coefficients of the polynomial $f_\downarrow(\lambda)$ satisfy a nonsingular system of linear equations whose coefficients are expressed in terms of the sequences $\{ \lambda_n \}_{n \ge 0}$ and $\{ \gamma_n \}_{n \ge 0}$. Thus any polynomial whose coefficients satisfy this system must necessarily coincide with $f_\downarrow(\lambda)$. We will need this result in Section~\ref{sec:inverse}.

We start with some identities for the eigenvalues and the norming constants of the problem $\mathscr{P}(q, f, F)$. Such identities are characteristic to problems with boundary conditions dependent on the eigenvalue parameter; they were used in~\cite{G06} to obtain explicit expressions for all the coefficients of the boundary conditions in the case of linear dependence on the eigenvalue parameter (i.e., $\ind f = \ind F = 2$ in our notation).

\begin{lemma} \label{lem:zero_sum}
The following identities hold:
\begin{equation*}
  \sum_{n=0}^\infty \frac{\lambda_n^k f_\downarrow(\lambda_n)}{\gamma_n} = 0, \qquad k = 0, \ldots, d - 1.
\end{equation*}
\end{lemma}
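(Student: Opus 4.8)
The plan is to realize each summand as a residue of a single meromorphic function and then to show that the total sum of residues vanishes by a contour-integration argument. For $k = 0, \ldots, d-1$ I would consider
\[
  g_k(\lambda) := \frac{\lambda^k \chi(0, \lambda)}{\Phi(\lambda)} .
\]
Since $\chi(0, \cdot)$ is entire and the zeros $\lambda_n$ of $\Phi$ are exactly the eigenvalues of $\mathscr{P}(q, f, F)$, the poles of $g_k$ are precisely the points $\lambda_n$. These poles are simple: by~(\ref{eq:Phi_beta_gamma}) we have $\Phi'(\lambda_n) = \beta_n \gamma_n \ne 0$, and the residue there is easily evaluated. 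Indeed, from $\varphi(0, \lambda) = f_\downarrow(\lambda)$ in~(\ref{eq:phi_psi}) and~(\ref{eq:beta}) we get $\chi(0, \lambda_n) = \beta_n \varphi(0, \lambda_n) = \beta_n f_\downarrow(\lambda_n)$, so that
\[
  \Res_{\lambda = \lambda_n} g_k = \frac{\lambda_n^k \chi(0, \lambda_n)}{\Phi'(\lambda_n)} = \frac{\lambda_n^k \beta_n f_\downarrow(\lambda_n)}{\beta_n \gamma_n} = \frac{\lambda_n^k f_\downarrow(\lambda_n)}{\gamma_n} .
\]
The standing assumption that no eigenvalue is a pole of $f$ guarantees $f_\downarrow(\lambda_n) \ne 0$, so the residue is genuinely nonzero and the pole is simple. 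Thus the asserted identities are equivalent to $\sum_{n} \Res_{\lambda_n} g_k = 0$.

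To establish this I would integrate $g_k$ over a sequence of circles $\Gamma_N = \{ |\lambda| = R_N \}$ with $R_N \to \infty$ chosen to lie in the gaps between consecutive eigenvalues; such radii exist because~(\ref{eq:lambda}) shows the $\lambda_n$ to be real, simple and separated by gaps of size $\asymp \sqrt{\lambda_n}$, while $\Phi$ has no nonreal zeros. By the residue theorem $\frac{1}{2\pi\iu} \oint_{\Gamma_N} g_k(\lambda) \, \du\lambda$ equals the partial sum of the enclosed residues, so it suffices to prove that these contour integrals tend to $0$.

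The decay estimate is the heart of the matter. Writing $\Phi = f_\downarrow \chi'(0, \cdot) + f_\uparrow \chi(0, \cdot)$ and dividing through by $\chi(0, \cdot)$ gives
\[
  g_k(\lambda) = \frac{\lambda^k}{f_\downarrow(\lambda)\, m(\lambda) + f_\uparrow(\lambda)}, \qquad m(\lambda) := \frac{\chi'(0, \lambda)}{\chi(0, \lambda)} .
\]
Away from the positive real axis the Weyl-type function $m$ satisfies $m(\lambda) \sim -\sqrt{-\lambda}$, so the denominator grows like $|\lambda|^{d+1/2}$ when $\deg f_\uparrow \le d$ and like $|\lambda|^{d+1}$ when $\deg f_\uparrow = d+1$; in either case it is $\gtrsim |\lambda|^{d+1/2}$. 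Since $k \le d-1$, this yields $|g_k(\lambda)| = O(|\lambda|^{-3/2})$ on the arcs of $\Gamma_N$ lying off the real axis. Near the real axis, where $m$ oscillates, I would instead use directly the standard lower bound $|\Phi(\lambda)| \gtrsim |\lambda|^{p} \eu^{\pi |\im \sqrt{\lambda}|}$ together with the matching upper bound $|\chi(0, \lambda)| \lesssim |\lambda|^{p - d - 1/2} \eu^{\pi |\im \sqrt{\lambda}|}$ valid on the gap circles, so that the exponential factors cancel and the same bound $|g_k| = O(|\lambda|^{-3/2})$ persists all along $\Gamma_N$. Consequently $\left| \oint_{\Gamma_N} g_k \, \du\lambda \right| \le 2\pi R_N \cdot O(R_N^{-3/2}) = O(R_N^{-1/2}) \to 0$, and letting $N \to \infty$ gives $\sum_{n \ge 0} \Res_{\lambda_n} g_k = 0$. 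The same estimate, applied on the circles $|\lambda - \lambda_n| \asymp \sqrt{\lambda_n}$, shows the residues to be $O(n^{-2})$, so the series converges absolutely and the limit of its partial sums is its sum.

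The step I expect to be most delicate is the uniform decay estimate for $g_k$ on $\Gamma_N$ near the positive real axis, where the asymptotics $m(\lambda) \sim -\sqrt{-\lambda}$ break down and one must rely on the exponential lower bound for $|\Phi|$ on the gap circles. Getting the polynomial orders to cancel correctly requires tracking the three parity cases $\deg f_\uparrow \in \{ d-1, d, d+1 \}$ separately and confirming that in each of them the hypothesis $k \le d-1$ produces at least the decay $|\lambda|^{-3/2}$; this case analysis, together with the choice of gap radii from~(\ref{eq:lambda}), is where the bulk of the technical work lies.
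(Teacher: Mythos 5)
Your proof is correct and is essentially the paper's own argument: the paper likewise realizes each summand as $\Res_{\lambda = \lambda_n} \lambda^k \chi(0, \lambda) / \Phi(\lambda)$ via (\ref{eq:phi_psi}), (\ref{eq:beta}) and (\ref{eq:Phi_beta_gamma}), and kills the total sum by integrating over circles of radius $\left( N - \tfrac{\ind f + \ind F - 1}{2} \right)^2$ lying in the spectral gaps, using the bounds $\chi(0, \lambda) = O \left( |\sqrt{\lambda}|^{\ind F} e^{|\im \sqrt{\lambda} \pi|} \right)$ and $1/\Phi(\lambda) = O \left( |\sqrt{\lambda}|^{-(\ind f + \ind F + 1)} e^{-|\im \sqrt{\lambda} \pi|} \right)$ on those circles, which make the integrand $O \left( N^{-(\ind f - 2k + 1)} \right)$ with $\ind f - 2k + 1 \ge 3$ against circumference $O(N^2)$. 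Your only deviation is organizational: where the paper applies these two uniform bounds along the entire circle, you handle the arcs away from the positive real axis by the Weyl-function asymptotics $m(\lambda) \sim -\sqrt{-\lambda}$ and reserve the $\Phi$--$\chi$ bounds for the near-axis arcs --- a harmless (if redundant) repackaging of the same estimate.
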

\begin{proof}
From~(\ref{eq:phi_psi}) and~(\ref{eq:beta}) we have
\begin{equation*}
  f_\downarrow(\lambda_n) = \varphi(0, \lambda_n) = \frac{\chi(0, \lambda_n)}{\beta_n}.
\end{equation*}
Together with~(\ref{eq:Phi_beta_gamma}) this implies (for sufficiently large $N$)
\begin{equation*}
  \sum_{n=0}^N \frac{\lambda_n^k f_\downarrow(\lambda_n)}{\gamma_n} = \sum_{n=0}^N \Res_{\lambda = \lambda_n} \frac{\lambda^k \chi(0, \lambda)}{\Phi(\lambda)} = \frac{1}{2 \pi \iu} \int_{C_N} \frac{\lambda^k \chi(0, \lambda)}{\Phi(\lambda)} \,\du \lambda,
\end{equation*}
where $C_N$ is the circle of radius
\begin{equation*}
  \left( N - \frac{\ind f + \ind F - 1}{2} \right)^2
\end{equation*}
centered at the origin. Expressing $\chi(x, \lambda)$ as a linear combination of the cosine- and sine-type solutions we obtain $\chi(x, \lambda) = O \left( \left| \sqrt{\lambda} \right|^{\ind F} e^{|\im \sqrt{\lambda}\pi|} \right)$. On the other hand, from~(\ref{eq:Phi_rough}) we get (see, e.g., the proof of \cite[Lemma 5.2]{G05} for details)
\begin{equation*}
  \frac{1}{\Phi(\lambda)} = O \left( \left| \sqrt{\lambda} \right|^{-(\ind f + \ind F + 1)} e^{-|\im \sqrt{\lambda}\pi|} \right), \qquad \lambda \in \bigcup_N C_N,
\end{equation*}
and thus
\begin{equation*}
  \frac{\lambda^k \chi(0, \lambda)}{\Phi(\lambda)} = O \left( \frac{1}{N^{\ind f - 2 k + 1}} \right), \qquad \lambda \in \bigcup_N C_N
\end{equation*}
with $\ind f - 2 k + 1 \ge 3$. Hence
\begin{equation*}
  \lim_{N \to \infty} \int_{C_N} \frac{\lambda^k \chi(0, \lambda)}{\Phi(\lambda)} \,\du \lambda = 0,
\end{equation*}
which proves the lemma.
\end{proof}

Denote by $p_{d-1}$, $\ldots$, $p_0$ the non-leading coefficients of the polynomial $f_\downarrow(\lambda)$ after dividing it by its leading coefficient:
\begin{equation*}
  \frac{(-1)^d}{h'_0} f_\downarrow(\lambda) = \prod_{k=1}^d (\lambda - h_k) = \lambda^d + p_{d-1} \lambda^{d-1} + \ldots + p_1 \lambda + p_0.
\end{equation*}
It is easy to see from the asymptotics of the eigenvalues and the norming constants that for each $k = 0$, $\ldots$, $2 d - 1$ the series
\begin{equation*}
  s_k := \sum_{n=0}^\infty \frac{\lambda_n^k}{\gamma_n}
\end{equation*}
converges absolutely. Lemma~\ref{lem:zero_sum} implies the following identities between the numbers $p_i$ and $s_j$:
\begin{equation} \label{eq:p_s}
  \sum_{i=0}^{d-1} p_i s_{i+k} = -s_{d+k}, \qquad k = 0, 1, \ldots, d - 1.
\end{equation}
We consider them as a system of linear equations (with respect to the numbers $p_i$), the matrix of which is the following Hankel matrix:
\begin{equation*}
  \begin{pmatrix}
    s_0     & s_1    & \ldots & s_{d-1} \\
    s_1     & s_2    & \ldots & s_d \\
    \vdots  & \vdots & \ddots & \vdots \\
    s_{d-1} & s_d    & \ldots & s_{2d-2}
  \end{pmatrix}.
\end{equation*}
The quadratic form corresponding to this matrix is positive definite:
\begin{equation*}
  \sum_{i,j=0}^{d-1} s_{i+j} \xi_i \xi_j = \sum_{i,j=0}^{d-1} \sum_{n=0}^\infty \frac{\lambda_n^{i+j} \xi_i \xi_j}{\gamma_n} = \sum_{n=0}^\infty \sum_{i,j=0}^{d-1} \frac{\lambda_n^{i+j} \xi_i \xi_j}{\gamma_n} = \sum_{n=0}^\infty \frac{1}{\gamma_n} \left( \sum_{i=0}^{d-1} \lambda_n^i \xi_i \right)^2 \ge 0
\end{equation*}
with equality if and only if $\sum_{i=0}^{d-1} \lambda_n^i \xi_i = 0$ for all $n$, i.e. $\xi_0 = \ldots = \xi_{d-1} = 0$. Thus the determinant of the above matrix is strictly positive and hence the system~(\ref{eq:p_s}) has a unique solution.

\section{Properties of two problems with a common boundary condition} \label{sec:two_problems}

We are now going to study further properties of the sequences $\{ \lambda_n \}_{n \ge 0}$ and $\{ \mu_n \}_{n \ge 0}$. We will first show that these two sequences interlace and then find more refined asymptotics for the difference of their square roots. As we will see in the next section, any two sequences with these two properties are indeed the eigenvalues of a pair of boundary value problems with a common boundary condition.

The function
\begin{equation*}
  m(\lambda) := -\frac{\Psi(\lambda)}{\Phi(\lambda)}
\end{equation*}
satisfies the identity $m(\overline{\lambda}) = \overline{m(\lambda)}$ and is a meromorphic function with poles at $\lambda_n$ and zeros at $\mu_n$. For nonreal values of $\lambda$ the solution
\begin{equation*}
  y(x, \lambda) := \psi(x, \lambda) + m(\lambda) \varphi(x, \lambda)
\end{equation*}
satisfies the boundary condition
\begin{equation*}
  F_\uparrow(\lambda) y(\pi, \lambda) - F_\downarrow(\lambda) y'(\pi, \lambda) = 0.
\end{equation*}
Using~(\ref{eq:phi_psi}) we calculate
\begin{multline*}
  \left. (\lambda - \mu) \int_0^{\pi} y(x, \lambda) y(x, \mu) \,\du x = \left( y(x, \lambda) y'(x, \mu) - y'(x, \lambda) y(x, \mu) \right) \right|_0^{\pi} \\
  = \left( F(\mu) - F(\lambda) \right) y(\pi, \lambda) y(\pi, \mu) + \alpha f_\downarrow(\lambda) f_\downarrow(\mu) \left( m(\lambda) - m(\mu) \right) \\
  + \left( f_\downarrow(\lambda) f_\uparrow(\mu) - f_\downarrow(\mu) f_\uparrow(\lambda) \right) \left( 1 + m(\lambda) \right) \left( 1 + m(\mu) \right).
\end{multline*}
For $\mu = \overline{\lambda}$ this yields
\begin{equation*}
\begin{split}
  \alpha \frac{\im m(\lambda)}{\im \lambda} &= \frac{1}{\left| f_\downarrow(\lambda) \right|^2} \int_0^{\pi} \left| y(x, \lambda) \right|^2 \,\du x \\
  &+ \left| \frac{y(\pi, \lambda)}{f_\downarrow(\lambda)} \right|^2 \frac{\im F(\lambda)}{\im \lambda} + \left| 1 + m(\lambda) \right|^2 \frac{\im f(\lambda)}{\im \lambda} > 0.
\end{split}
\end{equation*}
Thus $\alpha m(\lambda)$ is a Herglotz--Nevanlinna function, and hence its zeros $\mu_n$ and poles $\lambda_n$ interlace.

Using~(\ref{eq:phi_psi}), (\ref{eq:beta}) and the constancy of the Wronskian we obtain
\begin{align*}
  \Psi(\lambda_n) &= F_\uparrow(\lambda_n) \psi(\pi, \lambda_n) - F_\downarrow(\lambda_n) \psi'(\pi, \lambda_n) \\
  &= \beta_n \left( \varphi'(\pi, \lambda_n) \psi(\pi, \lambda_n) - \varphi(\pi, \lambda_n) \psi'(\pi, \lambda_n) \right) \\
  &= \beta_n \left( \varphi'(0, \lambda_n) \psi(0, \lambda_n) - \varphi(0, \lambda_n) \psi'(0, \lambda_n) \right) = \alpha \beta_n f_\downarrow^2(\lambda_n).
\end{align*}
Together with~(\ref{eq:Phi_beta_gamma}) this implies
\begin{equation*}
  \gamma_n = \frac{\alpha f_\downarrow^2(\lambda_n) \Phi'(\lambda_n)}{\Psi(\lambda_n)}.
\end{equation*}
We will need this formula in the next section in order to transform our two-spectra inverse problem to an inverse problem solved in~\cite{G17}, but for now we will use it to obtain more refined asymptotics for the difference $\sqrt{\lambda_n} - \sqrt{\vphantom{\lambda_n} \mu_n}$. The mean value theorem yields
\begin{equation} \label{eq:Psi_lambda_mu}
  \Psi(\lambda_n) = \Psi(\lambda_n) - \Psi(\mu_n) = \left( \sqrt{\lambda_n} - \sqrt{\vphantom{\lambda_n} \mu_n} \right) \left( \sqrt{\lambda_n} + \sqrt{\vphantom{\lambda_n} \mu_n} \right) \Psi'(\zeta_n)
\end{equation}
for some $\zeta_n \in \left[ \lambda_n, \mu_n \right]$ with $\sqrt{\zeta_n} = n - (\ind f + \ind F) / 2 + O \left( \frac{1}{n} \right)$. Thus
\begin{equation*}
  \sqrt{\lambda_n} - \sqrt{\vphantom{\lambda_n} \mu_n} = \frac{\alpha f_\downarrow^2(\lambda_n) \Phi'(\lambda_n)}{\left( \sqrt{\lambda_n} + \sqrt{\vphantom{\lambda_n} \mu_n} \right) \gamma_n \Psi'(\zeta_n)}.
\end{equation*}
Applying Lemma~\ref{lem:infinite_product} to the problems $\mathscr{P}(q, f, F)$ and $\mathscr{P}(q, f + \alpha, F)$, and then applying Lemma~\ref{lem:G} to the functions $\Phi$ and $\Psi$ and using (\ref{eq:gamma}), we obtain the asymptotics
\begin{equation*}
  \sqrt{\lambda_n} - \sqrt{\vphantom{\lambda_n} \mu_n} = \left( n - \frac{\ind f + \ind F}{2} \right)^{-2 r - 1} \left( \frac{\alpha \left( h'_0 \right)^2}{\pi} + \ell_2 \left( \frac{1}{n} \right) \right),
\end{equation*}
where
\begin{equation*}
  r := \ind f - 2 d = \begin{cases} 1, & \ind f \text{ is odd,} \\ 0, & \ind f \text{ is even.} \end{cases}
\end{equation*}

\section{Inverse problem} \label{sec:inverse}

In this section, we will prove Theorem~\ref{thm:two_spectra}. The results of the previous section show that if two sequences $\{ \lambda_n \}_{n \ge 0}$ and $\{ \mu_n \}_{n \ge 0}$ are the eigenvalues of the problems $\mathscr{P}(q, f, F)$ and $\mathscr{P}(q, f + \alpha, F)$, then they interlace and satisfy asymptotics of the form
\begin{equation} \label{eq:lambda_mu}
  \sqrt{\lambda_n} = n - L + \frac{\sigma}{\pi n} + \ell_2 \left( \frac{1}{n} \right), \qquad \sqrt{\lambda_n} - \sqrt{\vphantom{\lambda_n} \mu_n} = (n - L)^{-2 r - 1} \left( \nu + \ell_2 \left( \frac{1}{n} \right) \right)
\end{equation}
where
\begin{equation*}
  L := \frac{\ind f + \ind F}{2} \ge d - \frac{1 - r}{2} \ge -\frac{1}{2}, \qquad \nu := \frac{\alpha \left( h'_0 \right)^2}{\pi} \ne 0.
\end{equation*}
Note also that if $L = -1/2$ then $\ind f = 0$, and consequently $r = 0$. We are now going to prove that these conditions are also sufficient for two sequences to be the eigenvalues of two such problems. However, unlike the case of constant boundary conditions, in order to determine these problems uniquely, we need some additional data. The identity $\Psi(\lambda) - \Phi(\lambda) = \alpha f_\downarrow(\lambda) \chi(0, \lambda)$ (see Section~\ref{sec:preliminaries}) shows that the zeros of $f_\downarrow$ are also zeros of $\Psi - \Phi$. As we will see shortly, they can be chosen arbitrarily among the zeros of $\Psi - \Phi$.

Let now $\{ \lambda_n \}_{n \ge 0}$ and $\{ \mu_n \}_{n \ge 0}$ be any two sequences such that they interlace and satisfy asymptotics of the form~(\ref{eq:lambda_mu}) for some integer or half-integer $L \ge -1/2$, real $\nu \ne 0$ and $\sigma$, and $r \in \{0, 1\}$. Define the functions
\begin{equation*}
  \Phi(\lambda) := -\prod_{n < L} (\lambda_n - \lambda) \prod_{n = L} \pi (\lambda_n - \lambda) \prod_{n > L} \frac{\lambda_n - \lambda}{(n - L)^2}
\end{equation*}
and
\begin{equation*}
  \Psi(\lambda) := -\prod_{n < L} (\mu_n - \lambda) \prod_{n = L} \pi (\mu_n - \lambda) \prod_{n > L} \frac{\mu_n - \lambda}{(n - L)^2},
\end{equation*}
where in each product $n$ runs over all nonnegative integers satisfying the given condition. Let $d$ be an integer with
\begin{equation*}
  0 \le d \le L + \frac{1 - r}{2},
\end{equation*}
and let $i_1$, $i_2$, $\ldots$, $i_d$ be integers (indices) with $0 \le i_1 < i_2 < \ldots < i_d$. Define the polynomial
\begin{equation*}
  p(\lambda) := \prod_{k=1}^d (\tau_{i_k} - \lambda),
\end{equation*}
where $\tau_0 < \tau_1 < \ldots$ are the zeros of the function $\Phi(\lambda) - \Psi(\lambda)$. Lemma~\ref{lem:G} implies
\begin{equation*}
  \Phi'(\lambda_n) = (-1)^n \left( n - L \right)^{2L} \left( \frac{\pi}{2} + \ell_2 \left( \frac{1}{n} \right) \right).
\end{equation*}
Using~(\ref{eq:Psi_lambda_mu}) and Lemma~\ref{lem:G} again we also have
\begin{equation*}
  \Psi(\lambda_n) = (-1)^{n} (n - L)^{2 L - 2 r} \left( \pi \nu + \ell_2 \left( \frac{1}{n} \right) \right).
\end{equation*}
From the asymptotics of $\sqrt{\lambda_n} - \sqrt{\vphantom{\lambda_n} \mu_n}$ and the fact that $\lambda_n$ and $\mu_n$ interlace it follows that if $\nu > 0$ (respectively, $\nu < 0$) then $\mu_n < \lambda_n < \mu_{n+1}$ (respectively, $\lambda_n < \mu_n < \lambda_{n+1}$) for each $n \ge 0$. Thus the numbers $\gamma_n$ defined by
\begin{equation*}
  \gamma_n := \frac{\pi \nu p^2(\lambda_n) \Phi'(\lambda_n)}{\Psi(\lambda_n)}
\end{equation*}
are all positive and have the asymptotics
\begin{equation*}
  \gamma_n = \left( n - L \right)^{4 d + 2 r} \left( \frac{\pi}{2} + \ell_2 \left( \frac{1}{n} \right) \right).
\end{equation*}
By \cite[Theorem 4.7]{G17}, there exists a boundary value problem $\mathscr{P}(q, f, F)$ having the eigenvalues $\{ \lambda_n \}_{n \ge 0}$ and the norming constants $\{ \gamma_n \}_{n \ge 0}$. Moreover, $\ind f = 2 d + r \ge 0$ and $\ind F = 2 L - 2 d - r \ge -1$. Denote $\alpha := \pi \nu / \left( h'_0 \right)^2$ with $h'_0$ defined as at the beginning of Section~\ref{sec:preliminaries}. It only remains to show that the problem $\mathscr{P}(q, f + \alpha, F)$ has the eigenvalues $\mu_n$. But first we show that the polynomials $f_\downarrow(\lambda)$ and $p(\lambda)$ coincide up to a constant factor. Arguing as in the proof of Lemma~\ref{lem:zero_sum} we have
\begin{equation*}
  \sum_{n=0}^\infty \frac{\lambda_n^k p(\lambda_n)}{\gamma_n} = \sum_{n=0}^\infty \frac{\lambda_n^k \Psi(\lambda_n)}{\pi \nu p(\lambda_n) \Phi'(\lambda_n)} = \frac{1}{2 \pi^2 \nu \iu} \lim_{N \to \infty} \int_{C_N} \frac{\lambda^k \left( \Psi(\lambda) - \Phi(\lambda) \right)}{p(\lambda) \Phi(\lambda)} \,\du \lambda = 0,
\end{equation*}
where $C_N$ is the same as in that proof. Now arguing as after Lemma~\ref{lem:zero_sum} we obtain that the non-leading coefficients of the polynomial $(-1)^d p(\lambda)$ satisfy the system~(\ref{eq:p_s}). Therefore $f_\downarrow(\lambda) = h'_0 p(\lambda)$.

Denote the eigenvalues of the boundary value problem $\mathscr{P}(q, f + \alpha, F)$ by $\widehat{\mu}_n$. They coincide with the zeros of the function
\begin{equation*}
  \widehat{\Psi}(\lambda) := F_\uparrow(\lambda) \psi(\pi, \lambda) - F_\downarrow(\lambda) \psi'(\pi, \lambda),
\end{equation*}
where $\psi(x, \lambda)$ is defined as in~(\ref{eq:phi_psi}). Using the results of Section~\ref{sec:two_problems}, we obtain
\begin{equation*}
  \widehat{\Psi}(\lambda_n) = \frac{\alpha f_\downarrow^2(\lambda_n) \Phi'(\lambda_n)}{\gamma_n} = \frac{\pi \nu p^2(\lambda_n) \Phi'(\lambda_n)}{\gamma_n} = \Psi(\lambda_n), \qquad n \ge 0.
\end{equation*}
This and the proofs of Lemmas~\ref{lem:zero_sum}, \ref{lem:infinite_product} and \ref{lem:G} show that $\left( \widehat{\Psi}(\lambda) - \Psi(\lambda) \right) / \Phi(\lambda)$ is an entire function satisfying the estimate
\begin{equation*}
  \frac{\widehat{\Psi}(\lambda) - \Psi(\lambda)}{\Phi(\lambda)} = O \left( \frac{1}{\sqrt{\lambda}} \right)
\end{equation*}
on $\bigcup_N C_N$ and hence by the maximum principle on the whole plane. Then the Liouville theorem implies that this function is identically zero. Thus $\widehat{\Psi}(\lambda) \equiv \Psi(\lambda)$ and hence $\widehat{\mu}_n = \mu_n$, $n \ge 0$.

So far, we have constructed two problems $\mathscr{P}(q, f, F)$ and $\mathscr{P}(q, f + \alpha, F)$ with the eigenvalues $\{ \lambda_n \}_{n \ge 0}$ and $\{ \mu_n \}_{n \ge 0}$ respectively, and such that the poles of $f$ are $i_1$-th, $i_2$-th, $\ldots$, $i_d$-th zeros of $\Phi(\lambda) - \Psi(\lambda)$. In order to prove that such a pair of problems is unique, we assume that the problems $\mathscr{P}(q, f, F)$ and $\mathscr{P}(\widetilde{q}, \widetilde{f}, \widetilde{F})$ have the eigenvalues $\{ \lambda_n \}_{n \ge 0}$, and the problems $\mathscr{P}(q, f + \alpha, F)$ and $\mathscr{P}(\widetilde{q}, \widetilde{f} + \widetilde{\alpha}, \widetilde{F})$ have the eigenvalues $\{ \mu_n \}_{n \ge 0}$. We also assume that the poles of $f$ (respectively, $\widetilde{f}$) are $i_1$-th, $i_2$-th, $\ldots$, $i_d$-th zeros of $\Phi - \Psi$ (respectively, $\widetilde{\Phi} - \widetilde{\Psi}$). Then $\Phi \equiv \widetilde{\Phi}$ and $\Psi \equiv \widetilde{\Psi}$ by Lemma~\ref{lem:infinite_product}, and hence $p \equiv \widetilde{p}$. Thus
\begin{equation*}
  \gamma_n = \frac{\pi \nu p^2(\lambda_n) \Phi'(\lambda_n)}{\Psi(\lambda_n)} = \frac{\pi \nu \widetilde{p}^2(\lambda_n) \widetilde{\Phi}'(\lambda_n)}{\widetilde{\Psi}(\lambda_n)} = \widetilde{\gamma}_n.
\end{equation*}
Therefore the uniqueness part of \cite[Theorem 4.7]{G17} implies that $q = \widetilde{q}$ a.e. on $[0, \pi]$, $f = \widetilde{f}$ and $F = \widetilde{F}$. Finally, $f = \widetilde{f}$ yields
\begin{equation*}
  \alpha = \pi \nu / \left( h'_0 \right)^2 = \pi \nu / \left( \widetilde{h}'_0 \right)^2 = \widetilde{\alpha}.
\end{equation*}

\appendix
\section{Auxiliary results}

In this appendix we prove two auxiliary lemmas used in the main body of the paper.

\begin{lemma} \label{lem:infinite_product}
  The characteristic function $\Phi(\lambda)$ of $\mathscr{P}(q, f, F)$ with $\ind f \ge 0$ has the infinite product representation
\begin{equation*}
  \Phi(\lambda) = -\prod_{n < L} (\lambda_n - \lambda) \prod_{n = L} \pi (\lambda_n - \lambda) \prod_{n > L} \frac{\lambda_n - \lambda}{(n - L)^2}
\end{equation*}
with
\begin{equation*}
  L := \frac{\ind f + \ind F}{2}.
\end{equation*}
\end{lemma}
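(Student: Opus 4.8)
The plan is to show that the entire function $\Phi$ and the infinite product on the right-hand side, which I denote by $\widetilde\Phi(\lambda)$, differ only by a multiplicative constant equal to $1$. First I would verify that $\widetilde\Phi$ is a well-defined entire function. By the eigenvalue asymptotics~(\ref{eq:lambda}) we have $\sqrt{\lambda_n} = n - L + O(1/n)$, hence $\lambda_n = (n - L)^2 + O(1)$, so that for each $n > L$ the factor $(\lambda_n - \lambda)/(n - L)^2 = 1 + O(1/n^2)$ uniformly on compact $\lambda$-sets. Since $\sum_{n > L} n^{-2} < \infty$, the tail product converges absolutely and locally uniformly, and together with the finitely many factors for $n \le L$ this defines an entire function whose zero set is exactly $\{\lambda_n\}_{n \ge 0}$.

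Next I would record that $\Phi$ itself is entire in $\lambda$ of order $1/2$, with zeros precisely at the eigenvalues $\lambda_n$; moreover each $\lambda_n$ is a simple zero because $\Phi'(\lambda_n) = \beta_n \gamma_n \ne 0$ by~(\ref{eq:Phi_beta_gamma}). Consequently the ratio $g := \Phi / \widetilde\Phi$ is a zero-free entire function, and it remains to prove $g \equiv 1$.

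The heart of the argument is an asymptotic comparison. I would compute the leading large-$\lambda$ behaviour of $\widetilde\Phi$ away from its zeros by splitting off the finitely many factors with $n \le L$ (which contribute a polynomial in $\lambda$, i.e.\ a power of $\sqrt\lambda$) and recognizing the remaining tail $\prod_{n > L}(1 - \lambda/(n-L)^2)$, after absorbing the convergent correction $\prod_{n>L} \lambda_n/(n-L)^2$, as a standard trigonometric product of $\sin \pi\sqrt\lambda$- or $\cos \pi\sqrt\lambda$-type, the choice being dictated by the parity of $2L = \ind f + \ind F$ and by whether $L$ is an integer or a half-integer. The normalizing constants in the statement---the overall sign $-1$, the factor $\pi$ attached to the index $n = L$, and the divisors $(n - L)^2$---are chosen precisely so that this computation reproduces the rough asymptotics~(\ref{eq:Phi_rough}) of $\Phi$, including its leading coefficient and its $\left| \sqrt\lambda \right|^{\ind f + \ind F} \eu^{|\im \sqrt\lambda| \pi}$ growth. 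Having matched the asymptotics, I would conclude that both $g$ and $1/g$ are bounded on the expanding circles $C_N$ used in Section~\ref{sec:preliminaries}, and that $g(\lambda) \to 1$ there; by the maximum principle these bounds propagate to the whole plane, so Liouville's theorem forces $g$ to be the constant $1$, which gives $\Phi \equiv \widetilde\Phi$.

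The main obstacle I anticipate is the last step's uniform estimate of $\widetilde\Phi$ off its zeros: one must control the tail of the infinite product along $C_N$ and identify its asymptotics with the known behaviour of $\Phi$ sharply enough (constant included) to pin down $g \equiv 1$ rather than merely $g \equiv \mathrm{const}$. This is exactly the place where the specific convergence factors $(n - L)^{-2}$ and the boundary normalizations $\pi$ and $-1$ must be checked to be the correct ones, and where the parity case distinction between integer and half-integer $L$ enters.
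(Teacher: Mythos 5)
Your proposal is correct, but it takes a genuinely different route from the paper's. The paper invokes Hadamard's factorization theorem at the outset: $\Phi$ is entire of order $1/2$ with zeros exactly at the $\lambda_n$, hence $\Phi(\lambda) = C\prod_{n\ge 0}(1-\lambda/\lambda_n)$, and the entire proof reduces to identifying the constant $C$. That identification is done not on expanding circles but by a single radial limit: the paper divides by $\lambda^{L+1/2}\sin\left(\sqrt{\lambda}+L\right)\pi$, rewritten as a product via the combined sine/cosine product formula, and lets $\lambda\to-\infty$ along the negative real axis, where everything is real, the tail product tends to $1$ elementarily, and~\eqref{eq:Phi_rough} together with~\eqref{eq:lambda} forces $C = -\prod_{n<L}\lambda_n\prod_{n=L}\pi\lambda_n\prod_{n>L}\lambda_n/(n-L)^2$, which is precisely the assertion. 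Your scheme --- form $g=\Phi/\widetilde{\Phi}$ and force $g\equiv 1$ by the maximum principle and Liouville --- avoids citing Hadamard and is also sound (it is the pattern of \cite[Theorem 1.1.4]{FY01}); its crux, as you correctly anticipate, is the two-sided off-the-zeros estimate on the circles $C_N$, namely that $\prod_{n>L}\left|1+\frac{\lambda_n-(n-L)^2}{(n-L)^2-\lambda}\right| \to 1$ uniformly on $C_N$, which does hold because $\lambda_n-(n-L)^2=O(1)$ and $\sum_{n}\left|(n-L)^2-\lambda\right|^{-1}=O(\log N/N)$ for $\lambda\in C_N$. So the trade-off is: your argument is more self-contained but genuinely requires the circle estimates you flag as the main obstacle, whereas the paper lets Hadamard do the structural work and then needs only a one-ray limit, with no lower bounds on circles at all. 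One detail where you are more careful than the paper: you justify the simplicity of the zeros of $\Phi$ via $\Phi'(\lambda_n)=\beta_n\gamma_n\ne 0$ from~\eqref{eq:Phi_beta_gamma}, a point the paper leaves implicit when matching the Hadamard product to the spectrum.
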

\begin{proof}
The first-order asymptotics
\begin{equation} \label{eq:Phi_rough}
  \Phi(\lambda) = \lambda^{L + 1/2} \sin \left( \sqrt{\lambda} \pi + L \pi \right) + O\left( \lambda^L e^{|\im \sqrt{\lambda}\pi|} \right)
\end{equation}
was obtained in~\cite{G17} (see the proof of Lemma 2.2 therein). From Hadamard's theorem we obtain
\begin{equation*}
  \Phi(\lambda) = C \prod_{n = 0}^\infty \left( 1 - \frac{\lambda}{\lambda_n} \right) = C \prod_{n < L} \left( 1 - \frac{\lambda}{\lambda_n} \right) \prod_{n = L} \left( 1 - \frac{\lambda}{\lambda_n} \right) \prod_{n > L} \left( 1 - \frac{\lambda}{\lambda_n} \right).
\end{equation*}
Note also that, according to our assumption, $L$ is an integer or half-integer with $L \ge -1/2$ and this guarantees that $\lambda^{L + 1/2} \sin \left( \sqrt{\lambda} \pi + L \pi \right)$ is an entire function. We can combine infinite product representations for the sine and cosine functions into
\begin{equation*}
  \sin \left( \sqrt{\lambda} \pi + L \pi \right) = (-1)^{\lfloor L \rfloor} \prod_{n = L} \pi \sqrt{\lambda} \prod_{n > L} \left( 1 - \frac{\lambda}{\left( n - L \right)^2} \right).
\end{equation*}
The use of the identities
\begin{equation*}
  (-1)^{\lfloor L \rfloor} = - \prod_{n < L} (-1) \prod_{n = L} (-1), \qquad \lambda^{L + 1/2} = \prod_{n < L} \lambda \prod_{n = L} \sqrt{\lambda}
\end{equation*}
yields
\begin{equation*}
  \lambda^{L + 1/2} \sin \left( \sqrt{\lambda} \pi + L \pi \right) = - \prod_{n < L} (-\lambda) \prod_{n = L} (-\pi \lambda) \prod_{n > L} \left( 1 - \frac{\lambda}{\left( n - L \right)^2} \right).
\end{equation*}
Thus
\begin{equation*}
\begin{split}
  \frac{\Phi(\lambda)}{\lambda^{L + 1/2} \sin \left( \sqrt{\lambda} \pi + L \pi \right)} = - C &\prod_{n < L} \left( \frac{1}{\lambda_n} - \frac{1}{\lambda} \right) \prod_{n = L} \left( \frac{1}{\pi \lambda_n} - \frac{1}{\pi \lambda} \right) \\
  &\times \prod_{n > L} \frac{\left( n - L \right)^2}{\lambda_n} \prod_{n > L} \left( 1 + \frac{\lambda_n - \left( n - L \right)^2}{\left( n - L \right)^2 - \lambda} \right).
\end{split}
\end{equation*}
Taking the limit as $\lambda \to -\infty$ and using~(\ref{eq:Phi_rough}) and~(\ref{eq:lambda}) we obtain
\begin{equation*}
  C = - \prod_{n < L} \lambda_n \prod_{n = L} \pi \lambda_n \prod_{n > L} \frac{\lambda_n}{\left( n - L \right)^2},
\end{equation*}
which proves the lemma.
\end{proof}

To prove our next result we need a lemma of Marchenko and Ostrovskii.

\begin{lemma}[{\cite[Lemma~3.3]{MO75}, \cite[Lemma~3.4.2]{M77}}] \label{lem:Marchenko}
  For functions $u(z)$ and $v(z)$ to admit representations of the form
\begin{equation*}
  u(z) = \sin \pi z + A\pi\frac{4z}{4z^2-1}\cos \pi z + \frac{g_1(z)}{z}, \qquad v(z) = \cos \pi z - B\pi\frac{\sin \pi z}{z} + \frac{g_2(z)}{z},
\end{equation*}
where $g_1(z) = \int_0^{\pi} \widetilde{g}_1(t) \cos zt \,\du t$ and $g_2(z) = \int_0^{\pi} \widetilde{g}_2(t) \sin zt \,\du t$ with $\widetilde{g}_1$, $\widetilde{g}_2 \in \mathscr{L}_2[0,\pi]$ and $\int_0^{\pi} \widetilde{g}_1(t)\,\du t = 0$,
it is necessary and sufficient to have the form
\begin{equation*}
  u(z) = \pi z \prod_{n=1}^{\infty} n^{-2}(u_n^2 - z^2), \qquad u_n = n - \frac{A}{n} + \ell_2 \left( \frac{1}{n} \right),
\end{equation*}
\begin{equation*}
  v(z) = \prod_{n=1}^{\infty} \left( n - \frac{1}{2} \right)^{-2}(v_n^2 - z^2), \qquad v_n = n - \frac{1}{2} - \frac{B}{n} + \ell_2 \left( \frac{1}{n} \right).
\end{equation*}
\end{lemma}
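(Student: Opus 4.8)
The statement bundles two fully analogous characterisations --- one for the odd, sine-type function $u$ and one for the even, cosine-type function $v$ --- so my plan is to treat $u$ in detail and then transcribe the argument for $v$ with $\sin$ and $\cos$ interchanged and the zeros shifted by $1/2$. Since the asserted equivalence is an \emph{if and only if}, I would split it into the forward implication, from the integral representation to the product together with the zero asymptotics, and the converse, from the product and asymptotics back to the integral representation. This is the classical Marchenko--Ostrovskii characterisation, and both halves rest on the interplay between Hadamard's theorem and the Paley--Wiener theorem.

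For the forward direction, note first that $g_1(z) = \int_0^{\pi} \widetilde{g}_1(t)\cos zt\,\du t$ with $\widetilde{g}_1 \in \mathscr{L}_2[0,\pi]$ is, by Paley--Wiener, an even entire function of exponential type $\pi$ whose restriction to $\mathbb{R}$ lies in $\mathscr{L}_2$ and tends to $0$ at $\pm\infty$; the normalisation $\int_0^{\pi}\widetilde{g}_1 = 0$ is exactly $g_1(0) = 0$, so $g_1(z)/z$ is entire and odd. Hence $u$ is an odd entire function of exponential type $\pi$ with $u(z) = \sin\pi z + o\!\left(\eu^{|\im z|\pi}\right)$ along the real axis. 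A Rouch\'{e} argument on circles separating the integers shows that $u$ has one zero $u_n$ near each $n$, so by Hadamard's theorem $u(z) = \pi z\prod_{n\ge 1} n^{-2}(u_n^2 - z^2)$, the leading constant being fixed by comparison with $\sin\pi z$. To extract $u_n = n - A/n + \ell_2(1/n)$ I would substitute $z = u_n$ into the representation, expand $\sin\pi u_n$, $\cos\pi u_n$ and $4u_n/(4u_n^2-1)$ around $z = n$, and solve for $u_n - n$; the error is controlled because the sampled values $\{g_1(n)\}$ form an $\ell_2$ sequence.

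For the converse I would start from the ratio $u(z)/\sin\pi z = \prod_{n\ge 1}\bigl(1 + (u_n^2 - n^2)/(n^2 - z^2)\bigr)$ and use the key estimate $u_n^2 - n^2 = -2A + \ell_2(1)$, which follows from $u_n - n = -A/n + \ell_2(1/n)$. The constant part $-2A$ of this deviation is precisely what the explicit term $A\pi\frac{4z}{4z^2-1}\cos\pi z$ accounts for --- this term is entire because $\cos\pi z$ cancels the poles of $\frac{4z}{4z^2-1}$ at $z = \pm 1/2$, and for large $z$ it behaves like $A\pi z^{-1}\cos\pi z$, matching the leading correction --- while the remaining $\ell_2$ part must be shown to produce a tail of the form $g_1(z)/z$ with $g_1$ the cosine transform of an $\mathscr{L}_2$ function. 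The vanishing at $z=0$ of this tail forces $\int_0^{\pi}\widetilde{g}_1 = 0$.

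The main obstacle is this converse: converting the $\ell_2$ asymptotics of the zeros into genuine $\mathscr{L}_2$-membership of the transform density $\widetilde{g}_1$. This is the technical heart of the method and is carried out by expanding $u(z) - \sin\pi z - A\pi\frac{4z}{4z^2-1}\cos\pi z$ as a Fourier-type series indexed by the zero deviations and invoking Paley--Wiener together with a Parseval bookkeeping that matches $\ell_2$ coefficient data to $\mathscr{L}_2$ densities. Since the result is exactly the lemma of Marchenko and Ostrovskii, in the paper I would simply quote it in the stated form rather than reproduce this analysis.
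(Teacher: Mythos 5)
The paper does not prove this lemma at all: it is imported verbatim from \cite[Lemma~3.3]{MO75} and \cite[Lemma~3.4.2]{M77} with only the citation, which is exactly what you arrive at in your final sentence, so your treatment matches the paper's. Your sketch of the classical argument (Paley--Wiener plus Hadamard and Rouch\'{e} for the forward direction, and the conversion of $\ell_2$ zero asymptotics into $\mathscr{L}_2$-membership of the transform density for the converse) is a faithful outline of how Marchenko and Ostrovskii prove it, but no such proof is needed or given here.
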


Now we can prove

\begin{lemma} \label{lem:G}
  Let $\{ \eta_n \}_{n \ge 0}$ and $\{ \zeta_n \}_{n \ge 0}$ be sequences of real numbers having the asymptotics
\begin{equation*}
  \sqrt{\eta_n} = n - L + \frac{\sigma}{\pi n} + \ell_2 \left( \frac{1}{n} \right), \qquad \sqrt{\zeta_n} = n - L + O \left( \frac{1}{n} \right),
\end{equation*}
with an integer or half-integer $L \ge -1/2$ and a real $\sigma$, and let
\begin{equation*}
  G(\lambda) := -\prod_{n < L} (\eta_n - \lambda) \prod_{n = L} \pi (\eta_n - \lambda) \prod_{n > L} \frac{\eta_n - \lambda}{(n - L)^2}.
\end{equation*}
Then
\begin{equation*}
  G'(\zeta_n) = (-1)^n \left( n - L \right)^{2L} \left( \frac{\pi}{2} + \ell_2 \left( \frac{1}{n} \right) \right).
\end{equation*}
\end{lemma}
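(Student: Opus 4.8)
The plan is to pass to the variable $z = \sqrt{\lambda}$, recognize $\widetilde{G}(z) := G(z^2)$ as a polynomial multiple of one of the two canonical products of Lemma~\ref{lem:Marchenko}, read off from that lemma an explicit trigonometric expansion of $\widetilde{G}$ with an $\mathscr{L}_2$-controlled remainder, and then differentiate via the chain rule and evaluate at $z = \sqrt{\zeta_n}$.

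First I would set $a_n := \sqrt{\eta_n}$, so that $\widetilde{G}$ is an even entire function of order $1$ with zeros $\pm a_n$, and split according to the parity of $2L$. Reindexing the tail $n > L$ by $m := n - L$ (integer $L$) or $m := n - L + 1/2$ (half-integer $L$), the hypothesis on $\eta_n$ gives $a_n = m + \sigma/(\pi m) + \ell_2(1/m)$, respectively $a_n = m - 1/2 + \sigma/(\pi m) + \ell_2(1/m)$, so that the normalized tail product $\prod_{n > L}(a_n^2 - z^2)/(n - L)^2$ coincides \emph{exactly} with $u(z)/(\pi z)$ when $L$ is an integer and with $v(z)$ when $L$ is a half-integer, in the notation of Lemma~\ref{lem:Marchenko} (with $A = B = -\sigma/\pi$). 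Folding in the finitely many factors with $n \le L$, which assemble into an even polynomial $R(z)$ (equal to $\prod_{n \le L}(a_n^2 - z^2)$ in the integer case and $\prod_{n < L}(a_n^2 - z^2)$ in the half-integer case, the $\pi$'s cancelling against the $\pi z$ normalization of $u$), I obtain the exact factorizations $\widetilde{G}(z) = -R(z)\,u(z)/z$ and $\widetilde{G}(z) = -R(z)\,v(z)$ respectively, where $R(z)/z$ (resp. $R(z)$) has leading behaviour $\pm z^{2L+1}$. Lemma~\ref{lem:Marchenko} then supplies the representations $u(z) = \sin\pi z + \cdots$ and $v(z) = \cos\pi z + \cdots$, the dots being the $A$- (resp. $B$-) correction term and a quotient $g_i(z)/z$ with $\widetilde{g}_i \in \mathscr{L}_2[0,\pi]$.

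Next I would differentiate using $G'(\lambda) = \widetilde{G}'(\sqrt{\lambda})/(2\sqrt{\lambda})$ and evaluate at $z_n := \sqrt{\zeta_n} = (n - L) + O(1/n)$. In the product rule applied to $-R(z)u(z)/z$ (resp. $-R(z)v(z)$) the dominant contribution is the one in which the trigonometric factor is differentiated: $u'(z_n) = \pi\cos\pi z_n + \cdots$ equals $\pi(-1)^{n - L}\left(1 + O(1/n^2)\right)$ up to $\ell_2$-controlled corrections, and likewise $v'(z_n) = -\pi\sin\pi z_n + \cdots$ in the half-integer case, where $\sin\pi z_n = (-1)^{n - \lceil L\rceil}\left(1 + O(1/n^2)\right)$ at the shifted sampling points. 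Since $R(z_n)/z_n = \pm z_n^{2L+1}\left(1 + O(1/n^2)\right)$, combining these and dividing by $2 z_n$ produces the claimed leading value $(-1)^n (n - L)^{2L}\,\pi/2$; a direct bookkeeping of the three signs (from the leading coefficient of $R$, from the overall $-$, and from the trigonometric derivative) yields precisely $(-1)^n$ in both cases. The complementary term, in which $R(z)/z$ is differentiated, is harmless: it carries the factor $u(z_n)$ (resp. $v(z_n)$), which is $O(1/n)$ at these near-zeros, so after division by $2 z_n$ it contributes only at order $(n - L)^{2L}\,O(1/n^2) \subset (n-L)^{2L}\,\ell_2(1/n)$.

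The main obstacle is upgrading the remainder from $O(1/n)$ to a genuine $\ell_2(1/n)$ sequence, and this is exactly where the full strength of Lemma~\ref{lem:Marchenko} (as opposed to plain asymptotics) is needed. After differentiation the remainder is governed by the numbers $g_i(z_n)$ and $g_i'(z_n)$, where $g_i(z) = \int_0^\pi \widetilde{g}_i(t)\cos zt \,\du t$ or $\int_0^\pi \widetilde{g}_i(t)\sin zt \,\du t$; since $t\widetilde{g}_i(t)$ again lies in $\mathscr{L}_2[0,\pi]$, the derivative $g_i'$ is a trigonometric integral of the same type, and it remains to check that the values of such integrals along the perturbed progression $z_n = (n - L) + O(1/n)$ form $\ell_2$ sequences. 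I would verify this by expanding $\cos z_n t = \cos(n - L)t\,\cos(\epsilon_n t) - \sin(n - L)t\,\sin(\epsilon_n t)$ with $\epsilon_n := z_n - (n - L) = O(1/n)$: the first summand yields, up to a factor $1 + O(1/n^2)$, ordinary Fourier coefficients on $[0,\pi]$ (hence an $\ell_2$ sequence by Bessel's inequality), while the second is bounded in modulus by $\|t\widetilde{g}_i\|_{\mathscr{L}_2}\,|\epsilon_n| = O(1/n)$. The same estimate simultaneously absorbs the derivatives of the $A$- and $B$-correction terms, which are $O(1/n^2)$ at the sampling points, completing the argument in both the integer and half-integer cases.
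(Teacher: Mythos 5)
Your proposal is correct and takes essentially the same route as the paper: both identify the tail of the infinite product with the canonical products $u$, $v$ of Lemma~\ref{lem:Marchenko} (with $A = B = -\sigma/\pi$), obtain a trigonometric main term plus an $\mathscr{L}_2$ Fourier-integral remainder, and evaluate along the perturbed progression $\sqrt{\zeta_n}$, using Bessel's inequality to get the $\ell_2$ control. The only difference is organizational: you differentiate the exact factorization by the product rule and spell out the final bookkeeping (which the paper compresses into one sentence), whereas the paper first assembles unified asymptotic representations for $G(\lambda)$ and $G'(\lambda)$ in the variable $\lambda$ and then evaluates.
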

\begin{proof}
Using the asymptotics of $\eta_n$ and Lemma~\ref{lem:Marchenko} we obtain the representation of $G$ as the sum of four entire functions
\begin{equation*}
\begin{split}
  G(\lambda) = \lambda^{L + 1/2} \sin \left( \sqrt{\lambda} \pi + L \pi \right) &- \sigma \lambda^L \cos \left( \sqrt{\lambda} \pi + L \pi \right) \\
  &+ \lambda^L \int_0^{\pi} g(t) \cos \left( \sqrt{\lambda} t + L \pi \right) \,\du t + G_1(\lambda)
\end{split}
\end{equation*}
with $g \in \mathscr{L}_2[0, \pi]$, where $G_1(\lambda)$ is of the form
\begin{equation*}
  \int_0^{\pi} g(t) \left( p_1(\lambda) \frac{\sin \sqrt{\lambda} t}{\sqrt{\lambda}} + \frac{p_2(\lambda)}{\lambda} \cos \sqrt{\lambda} t \right) \,\du t + p_3(\lambda) \frac{\sin \sqrt{\lambda} \pi}{\sqrt{\lambda}} + p_4(\lambda) \frac{\cos \sqrt{\lambda} \pi}{4 \lambda - 1}
\end{equation*}
with some polynomials $p_1$, $p_2$, and $p_3$ of degree not exceeding $L$ and a polynomial $p_4$ of degree not exceeding $L + 1/2$. Therefore $G'_1(\lambda) = O\left( \lambda^{L - 1} e^{|\im \sqrt{\lambda}\pi|} \right)$ and consequently
\begin{multline*}
  G'(\lambda) = \frac{\pi}{2} \lambda^L \cos \left( \sqrt{\lambda} \pi + L \pi \right) + \left( L + \frac{\sigma \pi + 1}{2} \right) \lambda^{L - 1/2} \sin \left( \sqrt{\lambda} \pi + L \pi \right) \\
  -\frac{1}{2} \lambda^{L - 1/2} \int_0^{\pi} t g(t) \sin \left( \sqrt{\lambda} t + L \pi \right) \,\du t + O\left( \lambda^{L - 1} e^{|\im \sqrt{\lambda}\pi|} \right).
\end{multline*}
The statement of the lemma now follows from the asymptotics of $\zeta_n$.
\end{proof}

\section*{Acknowledgements}

The author thanks the anonymous referee for a very careful reading of the manuscript and for many useful suggestions.


\begin{thebibliography}{99}

\bibitem{AOK09} R. Kh. Amirov, A. S. Ozkan and B. Keskin,
\emph{Inverse problems for impulsive Sturm--Liouville operator with spectral parameter linearly contained in boundary conditions},
Integral Transforms Spec. Funct. \textbf{20} (2009), no. 7-8, 607--618.

\bibitem{BP80} A. I. Benedek and R. Panzone,
\emph{On inverse eigenvalue problems for a second-order differential equation with parameter contained in the boundary conditions},
Universidad Nacional del Sur, Instituto de Matem\'{a}tica, Bah\'{i}a Blanca, 1980.

\bibitem{BBW04} P. A. Binding, P. J. Browne and B. A. Watson,
\emph{Equivalence of inverse Sturm--Liouville problems with boundary conditions rationally dependent on the eigenparameter},
J. Math. Anal. Appl. \textbf{291} (2004), no. 1, 246--261.

\bibitem{B46} G. Borg,
\emph{Eine Umkehrung der Sturm--Liouvilleschen Eigenwertaufgabe},
Acta Math. \textbf{78} (1946), 1--96

\bibitem{C01} M. V. Chugunova,
\emph{Inverse spectral problem for the Sturm--Liouville operator with eigenvalue parameter dependent boundary conditions},
Operator theory, system theory and related topics, Birkh\"{a}user, Basel, 2001, pp. 187--194.

\bibitem{EGNT13} J. Eckhardt, F. Gesztesy, R. Nichols and G. Teschl,
\emph{Inverse spectral theory for Sturm--Liouville operators with distributional potentials},
J. Lond. Math. Soc. (2) \textbf{88} (2013), no. 3, 801--828.
\href{https://arxiv.org/abs/1210.7628}{arXiv:1210.7628}

\bibitem{G05} N. J. Guliyev,
\emph{Inverse eigenvalue problems for Sturm--Liouville equations with spectral parameter linearly contained in one of the boundary conditions},
Inverse Problems \textbf{21} (2005), no. 4, 1315--1330.
\href{https://arxiv.org/abs/0803.0566}{arXiv:0803.0566}

\bibitem{G06} N. J. Guliyev,
\emph{A uniqueness theorem for Sturm--Liouville equations with a spectral parameter linearly contained in the boundary conditions},
Proc. Inst. Math. Mech. Natl. Acad. Sci. Azerb. \textbf{25} (2006), 35--40.

\bibitem{G17} N. J. Guliyev,
\emph{Essentially isospectral transformations and their applications},
Ann. Mat. Pura Appl. (4) \textbf{199} (2020), no. 4, 1621--1648.
\href{https://arxiv.org/abs/1708.07497}{arXiv:1708.07497}

\bibitem{G19} N. J. Guliyev,
\emph{Schr\"{o}dinger operators with distributional potentials and boundary conditions dependent on the eigenvalue parameter},
J. Math. Phys. \textbf{60} (2019), no. 6, 063501, 23 pp.
\href{https://arxiv.org/abs/1806.10459}{arXiv:1806.10459}

\bibitem{G20a} N. J. Guliyev,
\emph{On extensions of symmetric operators},
Oper. Matrices \textbf{14} (2020), no. 1, 71--75.
\href{https://arxiv.org/abs/1807.11865}{arXiv:1807.11865}

\bibitem{G20b} N. J. Guliyev,
\emph{Inverse square singularities and eigenparameter dependent boundary conditions are two sides of the same coin},
submitted.
\href{https://arxiv.org/abs/2001.00061}{arXiv:2001.00061}

\bibitem{GN95} I. M. Guseinov and I. M. Nabiev,
\emph{Solution of a class of inverse Sturm--Liouville boundary value problems} (Russian),
Mat. Sb. \textbf{186} (1995), no. 5, 35--48; English transl. in Sb. Math. \textbf{186} (1995), no. 5, 661--674.

\bibitem{HM04} R. O. Hryniv and Ya. V. Mykytyuk,
\emph{Inverse spectral problems for Sturm--Liouville operators with singular potentials. II. Reconstruction by two spectra},
Functional analysis and its applications, Elsevier, Amsterdam, 2004, pp. 97--114.
\href{https://arxiv.org/abs/math/0301193}{arXiv:math/0301193}

\bibitem{IN17} Ch. G. Ibadzadeh and I. M. Nabiev,
\emph{Reconstruction of the Sturm--Liouville operator with nonseparated boundary conditions and a spectral parameter in the boundary condition} (Russian),
Ukra\"{\i}n. Mat. Zh. \textbf{69} (2017), no. 9, 1217--1223; English transl. in Ukrainian Math. J. \textbf{69} (2018), no. 9, 1416--1423.

\bibitem{K51} M. G. Krein,
\emph{Solution of the inverse Sturm--Liouville problem} (Russian),
Doklady Akad. Nauk SSSR (N.S.) \textbf{76} (1951), 21--24.

\bibitem{K04} N. Dzh. Kuliev,
\emph{Inverse problems for the Sturm--Liouville equation with a spectral parameter in the boundary condition} (Russian),
Dokl. Nats. Akad. Nauk Azerb. \textbf{60} (2004), no. 3-4, 10--16.

\bibitem{L64} B. M. Levitan,
\emph{On the determination of a Sturm--Liouville equation by two spectra} (Russian),
Izv. Akad. Nauk SSSR Ser. Mat. \textbf{28} (1964), no. 1, 63--78; English transl. in Amer. Math. Soc. Transl. (2) \textbf{68} (1968), 1--20.

\bibitem{LG64} B. M. Levitan and M. G. Gasymov,
\emph{Determination of a differential equation by two of its spectra} (Russian),
Uspehi Mat. Nauk \textbf{19} (1964), no. 2(116), 3--63; English transl. in Russian Math. Surveys \textbf{19} (1964), no. 2, 1--63.

\bibitem{M82} S. G. Mamedov,
\emph{Determination of a second-order differential equation from two spectra with a spectral parameter entering into the boundary conditions} (Russian),
Izv. Akad. Nauk Azerba\u{\i}dzhan. SSR Ser. Fiz.-Tekhn. Mat. Nauk \textbf{3} (1982), no. 2, 15--22.

\bibitem{M52} V. A. Marchenko,
\emph{Some questions of the theory of one-dimensional linear differential operators of the second order. I} (Russian),
Trudy Moskov. Mat. Ob\v{s}\v{c}. \textbf{1} (1952), 327--420; English transl. in Amer. Math. Soc. Transl. (2) \textbf{101} (1973), 1--104.

\bibitem{M77} V. A. Marchenko,
\emph{Sturm--Liouville operators and applications} (Russian),
Naukova Dumka, Kiev, 1977;
English transl.: AMS Chelsea Publishing, Providence, RI, 2011.

\bibitem{MO75} V. A. Marchenko and I. V. Ostrovskii,
\emph{A characterization of the spectrum of the Hill operator} (Russian),
Mat. Sb. (N.S.) \textbf{97(139)} (1975), no. 4(8), 540--606; English transl. in Math. USSR-Sb. \textbf{26} (1975), no. 4, 493--554.

\bibitem{SS05} A. M. Savchuk and A. A. Shkalikov,
\emph{Inverse problem for Sturm--Liouville operators with distribution potentials: reconstruction from two spectra},
Russ. J. Math. Phys. \textbf{12} (2005), no. 4, 507--514.

\end{thebibliography}
\end{document}